\renewcommand*{\backref}[1]{}
\renewcommand*{\backrefalt}[4]{%
    \ifcase #1 (Not cited.)%
    \or        (Cited on page~#2.)%
    \else      (Cited on pages~#2.)%
    \fi}
\definecolor{dkgreen}{rgb}{0,0.6,0}
\definecolor{gray}{rgb}{0.5,0.5,0.5}
\definecolor{mauve}{rgb}{0.58,0,0.82}
\tiny\color{gray},
\def\@tocline#1#2#3#4#5#6#7{\relax
  \ifnum #1>\c@tocdepth 
  \else
    \par \addpenalty\@secpenalty\addvspace{#2}%
    \begingroup \hyphenpenalty\@M
    \@ifempty{#4}{%
      \@tempdima\csname r@tocindent\number#1\endcsname\relax
    }{%
      \@tempdima#4\relax
    }%
    \parindent\z@ \leftskip#3\relax \advance\leftskip\@tempdima\relax
    \rightskip\@pnumwidth plus4em \parfillskip-\@pnumwidth
    #5\leavevmode\hskip-\@tempdima
      \ifcase #1
       \or\or \hskip 1em \or \hskip 2em \else \hskip 3em \fi%
      #6\nobreak\relax
    \hfill\hbox to\@pnumwidth{\@tocpagenum{#7}}\par
    \nobreak
    \endgroup
  \fi}
\newtheorem{lemma}{Lemma}[section]
\newtheorem{corollary}[lemma]{Corollary}
\newtheorem{theorem}[lemma]{Theorem}
\newtheorem{prop}[lemma]{Proposition}
\theoremstyle{definition}
\newtheorem{definition}[lemma]{Definition}
\newtheorem{example}[lemma]{Example}
\numberwithin{equation}{section} 
\newcommand{\Z}{\mathbb{Z}}
\newcommand{\M}{\mathfrak{M}}
\newcommand{\FM}{\mathcal{F}\mathfrak{M}}
\newcommand{\CP}{\mathbf{C}P}
\newcommand{\R}{\mathbb{R}}
\renewcommand{\S}{\mathbb{S}}
\renewcommand{\ss}{\mathfrak{s}}
\title{Exotic diffeomorphisms on $4$-manifolds with $b_2^+ = 2$}
\author{Haochen Qiu}
\begin{document}

\maketitle

\begin{abstract}
    While the exotic diffeomorphisms turned out to be very rich, we know much less about the $b^+_2 =2$ case, as parameterized gauge-theoretic invariants are not well defined. In this paper we present a method (that is, comparing the winding number of parameter families) to find exotic diffeomorphisms on simply-connected smooth closed $4$-manifolds with $b^+_2 =2$, and as a result we obtain that $2\CP^2 \# 10 \overline{\CP^2}$ admits exotic diffeomorphisms. This is currently the smallest known example of a closed $4$-manifold that supports exotic diffeomorphisms.
\end{abstract}

\tableofcontents
\section{Introduction}
The first examples of exotic diffeomorphisms on simply-connected smooth closed $4$-manifolds were found by Ruberman\cite{Ruberman1998AnOT} using parameterized Donaldson invariant, and his examples have $b_2^+ \ge 4$. While the exotic diffeomorphisms turned out to be very rich, we know much less about the $b^+_2 =2$ case, because parameterized gauge-theoretic invariants are not well defined. In this paper we present a method to find exotic diffeomorphisms on simply-connected smooth closed $4$-manifolds with $b^+_2 =2$, and as a result we obtain
\begin{theorem}
$2\CP^2 \# 10 \overline{\CP^2}$ admits exotic diffeomorphisms.
\end{theorem}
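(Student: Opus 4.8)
The plan is to realize $X := 2\CP^2 \# 10\overline{\CP^2}$ as a once-stabilized knot-surgered elliptic surface, write down an explicit candidate exotic diffeomorphism, and detect it by a families Seiberg--Witten count corrected by a winding number. Fix a knot $K\subset S^3$ with $\Delta_K(t)\neq 1$, say the trefoil. By Fintushel--Stern, $E(1)_K$ is homeomorphic but not diffeomorphic to the rational elliptic surface $E(1)=\CP^2\#9\overline{\CP^2}$, and by the dissolving results for knot-surgered elliptic surfaces $E(1)_K\#\CP^2$ is diffeomorphic to $E(1)\#\CP^2=2\CP^2\#9\overline{\CP^2}$; hence $E(1)_K\#\CP^2\#\overline{\CP^2}$ is diffeomorphic to $X$, which has $b_2^+=2$. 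First I would use two a priori different identifications of $X$ with $E(1)_K\#\CP^2\#\overline{\CP^2}$ --- one coming directly from the dissolving diffeomorphism, another obtained by re-gluing the knot-surgery region after passing it through the stabilizing summand --- to produce a self-diffeomorphism $f$ of $X$ as the composite of the two honest diffeomorphisms (in the spirit of Ruberman's construction), and I would arrange (post-composing if necessary with a diffeomorphism realizing an automorphism of the intersection form, available since $X$ is a connected sum of twelve copies of $\pm\CP^2$) that $f$ acts as the identity on $H_2(X;\Z)$. Since $X$ is simply connected, Freedman--Quinn/Perron then gives that $f$ is topologically isotopic to $\mathrm{id}_X$, and it remains only to prove that $f$ is not \emph{smoothly} isotopic to $\mathrm{id}_X$.

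The detecting invariant is built as follows. Choose the $\mathrm{Spin}^c$ structure $\mathfrak{s}$ on $X$ with $c_1(\mathfrak{s})=(2j-1)F+(H'-E')$, where $F$ is the fiber class in the $E(1)_K$-summand, $H'$ and $E'$ generate the added $\CP^2$ and $\overline{\CP^2}$, and $j$ is chosen so that the $t^j$-coefficient of $\Delta_K$ is nonzero; this class is characteristic and satisfies $c_1(\mathfrak{s})^2=0$, so the Seiberg--Witten moduli space has expected dimension $d(\mathfrak{s})=-1$. Because $X$ is simply connected with $b_2^+=2>1$, for a generic loop of metrics-and-perturbations $\{(g_t,\eta_t)\}_{t\in S^1}$ on the mapping torus of $f$ (carrying the $\mathrm{Spin}^c$ structure induced by $\mathfrak{s}$, which makes sense since $f$ acts trivially on $H^2$ and $X$ is simply connected) the perturbed equations have no reducible solutions and the total moduli space over $S^1$ is a finite set of irreducibles, giving a count $n(\{g_t,\eta_t\})\in\Z/2$. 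On the other hand the path $t\mapsto c_1^+(\mathfrak{s},g_t)-\eta_t^+$ lies in the complement of the zero section of the rank-two bundle $H^+\to S^1$ whose fiber is the space of $g_t$-self-dual harmonic classes; since the space of maximal positive-definite subspaces of $H^2(X;\R)$ is contractible this bundle is canonically trivialized (up to overall orientation), so the path has a well-defined mod-two winding number $w(\{g_t,\eta_t\})\in\Z/2$. I would then prove the key lemma that $\mathcal{SW}(f,\mathfrak{s}):=n(\{g_t,\eta_t\})+w(\{g_t,\eta_t\})$ is independent of the loop, and hence a smooth-isotopy invariant of $f$: when one homotopes the loop, a reducible solution can appear at an isolated parameter value --- the reducible locus has codimension $b_2^+=2$ and is met transversally at a point by a two-parameter family of metrics --- and one must check that the resulting jump of the irreducible count $n$ is exactly the jump of the winding number $w$, i.e. that the local change of $w$ across a simple zero of $c_1^+-\eta^+$ equals the local Seiberg--Witten wall-crossing contribution. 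For $f=\mathrm{id}_X$ a constant loop gives $n=w=0$, so $\mathcal{SW}(\mathrm{id}_X,\mathfrak{s})=0$; thus $\mathcal{SW}(f,\mathfrak{s})\neq 0$ forces $f$ not smoothly isotopic to the identity.

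Finally I would compute $\mathcal{SW}(f,\mathfrak{s})$ by neck-stretching. Up to isotopy $f$ is supported near the $E(1)_K$-region together with the stabilizing handles, so one may take the loop $\{(g_t,\eta_t)\}$ adapted to the connected-sum decomposition $X=E(1)_K\,\#\,(\CP^2\#\overline{\CP^2})$ with a long neck. A family gluing formula for Seiberg--Witten solutions then expresses both $n$ and $w$ in terms of the $b_2^+=1$ data of the $E(1)_K$-piece: as $t$ runs around $S^1$ the period point of the $E(1)_K$-factor crosses the wall cut out by the basic class $(2j-1)F$, and each transverse crossing contributes the $b_2^+=1$ wall-crossing term, which for $E(1)_K$ is governed by the Fintushel--Stern knot-surgery formula and equals $\pm$ the $t^j$-coefficient of $\Delta_K$, while the $\CP^2\#\overline{\CP^2}$-factor enters only through bookkeeping of the winding. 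This yields that $\mathcal{SW}(f,\mathfrak{s})$ equals, modulo $2$, the $t^j$-coefficient of $\Delta_K$, which is nonzero for $K$ the trefoil, so $f$ is exotic. The principal obstacle is the well-definedness lemma of the second paragraph --- identifying the jump of the families count with the jump of the winding number through a careful local model of the Seiberg--Witten equations near a reducible in a two-parameter family of metrics --- with the family gluing formula needed for the computation a close second, since the reducibles on the $b_2^+=1$ side must be tracked throughout the neck-stretch.
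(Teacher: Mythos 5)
Your plan shares the paper's main ingredients (families Seiberg--Witten counts over the mapping torus, a winding number to deal with the chamber problem when $b_2^+=2$, Quinn for topological isotopy, a neck-stretching/gluing computation, and knot-surgered $E(1)$'s), but the detection step has a genuine gap. You compute $\mathcal{SW}(f,\mathfrak{s})$ by asserting that each crossing of the wall cut out by $(2j-1)F$ ``contributes the $b_2^+=1$ wall-crossing term, which for $E(1)_K$ is governed by the Fintushel--Stern knot-surgery formula and equals $\pm$ the $t^j$-coefficient of $\Delta_K$.'' This conflates two different quantities: wall-crossing differences in $b_2^+=1$ Seiberg--Witten theory are universal, determined by cohomological data alone (for a simply connected manifold the jump is $\pm 1$), and in particular blind to the knot; the Alexander-polynomial coefficient is the \emph{value} of $SW$ in a chamber, not a jump. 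An invariant assembled from wall-crossing jumps plus a winding correction would return the same answer for the unknot, i.e.\ for the standard $E(1)$, so it cannot equal the $t^j$-coefficient of $\Delta_K$ and cannot detect exotica by itself. Relatedly, your diffeomorphism $f$ (``re-gluing the knot-surgery region after passing it through the stabilizing summand'') is not a defined operation. The paper's construction instead uses \emph{two} homeomorphic manifolds whose chamber invariants differ mod $2$: $M=E(1)_{K_m}$ and $M'=E(1)_{K_n}$ for twist knots with $m$ odd and $n$ even (Szab\'o's computation gives $SW(M)=m$ while $SW(M')$ is $n$ or $0$), a Wall diffeomorphism $\psi$ of the $S^2\times S^2$-stabilizations matching the homeomorphism on cohomology, and $f=f_1\circ f_2^{-1}$ with $f_1=id_M\#r$, $f_2=\psi^{-1}\circ(id_{M'}\#r)\circ\psi$; it is the difference of the ordinary invariants of $M$ and $M'$, transported through the gluing formula to the two mapping tori, that produces the nonvanishing, not any wall-crossing term.

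Second, the lemma you defer as the ``principal obstacle'' --- that the corrected count $n+w$ is unchanged when a homotopy of parameter loops crosses a reducible, i.e.\ that the jump of the families count exactly matches the jump of the winding number --- is a families wall-crossing statement and is precisely what the paper is engineered to avoid. With $b_2^-(X)\le 10$ the dimension condition forces $c_1(\mathfrak{s})^2\ge 0$, so $2\pi c_1(\mathfrak{s})^+$ stays uniformly away from $0$ in $H^+_g$ for every metric; hence all sufficiently small perturbation families have the \emph{same} winding number, and the paper's Lemma 2.5 (equal winding number implies equal $FSW$, proved by homotoping the metric family while dragging the perturbation along the canonical trivialization of $\mathcal{S}_g$) then shows $FSW$ with small perturbations is independent of $(g,\mu)$ and invariant under bundle isomorphisms --- no compensated invariant is ever needed. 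If you keep your route, you must actually prove the compensation identity via a local model at the reducible, and you must replace the single-knot construction by a comparison of two smooth structures (or otherwise justify that your $f$ has nonzero invariant); as written, the nonvanishing claim rests on the incorrect identification of wall-crossing contributions with Alexander coefficients.
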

To motivate the method, we first discuss the complications due to small $b^+_2$ in the ordinary case.

The ordinary Seiberg-Witten invariant depends on the choice of the metric and the perturbing self-dual $2$-form. All of such parameters can be separated to some ``chambers''. The Seiberg-Witten invariant remains constant for parameters within the same chamber. Furthermore, it remains unchanged for a parameter and its pushforward by a diffeomorphism. The space of parameters is equivalent to $S^{b^+_2-1}$, hence when $b^+_2 >1$, there is only one chamber and the ordinary Seiberg-Witten invariant is a well-defined smooth invariant. When $b^+_2 =1$, there are two chambers. Szab\'o's result\cite{Szabo96} says there are two homeomorphic smooth $4$-manifolds with $b^+_2 =1$, such that the Seiberg-Witten invariant for one of them is some $m$ for one chamber, and the invariant for another one can not be $m$ for any chamber. This proves that they are not diffeomorphic. Such $4$-manifolds are the smallest ones (in the sense of $b^+_2$) that admit exotic smooth structures detected by the gauge theory.

To detect exotic diffeomorphisms, we compute the family Seiberg-Witten invariants (FSW) for the mapping tori of two diffeomorphisms, and if they are different, these tori are not diffeomorphic, hence these diffeomorphisms are not smoothly isotopic. By this machinery Ruberman and Baraglia-Konno prove that for $X$ with an exotic smooth structure detected by the Seiberg-Witten invariant and $b^+_2(X) >1$, $X\# \CP^2 \#2 \overline{\CP^2}$ and $X\# S^2 \times S^2$ admit exotic diffeomorphisms. Note that these manifolds have $b^+_2 >2$. 

Our work generalize such results to $b^+_2 =2$. The main issue in this case is that, the family invariant $FSW$ on a family of manifolds, depends on the family of parameters. The mapping torus is an $S^1$-family of manifolds, so the space of parameter families is an $S^1$-family of $S^{b^+_2-1}=S^1$. The set of chambers corresponds to the set of fiberwise homotopy classes of these parameter families, which has more than one elements. If there exists a bundle isomorphism between two mapping tori, it would bring a chamber on one mapping torus to a chamber on another one. To disprove this hypothesis, we need to compare the $FSW$ for these chambers. But the situation is a bit more complicated than in the ordinary case treated by Szab\'o: 


\begin{enumerate}
\item[\textbullet] $FSW$ may run over all possible values ($\Z$ if it is an integer invariant, or $\Z/2$ for the mod $2$ invariant) as the chambers change.
\item[\textbullet] The set of chambers corresponds to $\Z$ or $\Z/2$ only noncanonically, which means we can only measure the difference between two parameter families on the same mapping torus. But we cannot compare two chambers on different mapping tori.
\item[\textbullet] The family of metrics will also determine the chamber, but we don't know how the diffeomorphism in the hypothesis acts on the families of metrics.
\end{enumerate}
To solve all these problems, we construct a homotopy invariant of the parameter families, which is called the winding number. We prove that this is an invariant under the diffeomorphism of mapping tori. This viewpoint symplifies the chamber structure and decouples the families of metrics and the family of perturbing $2$-forms. By additional assumption on $b^-_2$ we can throw out the influences of the metric family and the $\text{Spin}^c$-structure, such that we can apply the traditional wall-crossing and gluing arguments.

As a remark, Konno-Mallick-Taniguchi\cite{konno2024exoticdehntwists4manifolds} noted that $2\CP^2 \# n \overline{\CP^2}$ admits exotic diffeomorphisms for $n> 10$ by a fairly different method. As far as we know, our result is the smallest example of a closed $4$-manifold that supports exotic diffeomorphisms. Furthermore, our result is given by  the families Seiberg-Witten invariant so we are able to detect an exotic diffeomorphism of infinite order in the mapping class group.

\subsection*{Acknowledgements}
The author wants to thank his advisor Daniel Ruberman for asking the main problem of this work and for his suggestions on the references. The author wants to thank Hokuto Konno for valuable comments and discussions. This work is partially supported by NSF grant DMS-1952790.

\subsection{Prerequisite}


Let $H^2(X;\R)$ be the vector space of harmonic $2$-forms. The cup product gives a nondegenerate quadratic form on $H^2(X;\R)$ with positive index $b_2^+$. All the positive definite rank $b_2^+$ subspaces in $H^2(X;\R)$ form an open subset of the rank $b_2^+$ Grassmanian in $H^2(X;\R)$. It will be denoted by $G_{b_2^+}^+(H^2(X;\R))$. Every metric $g$ on $X$ gives a unique Hodge star operator $\star_g$ by the formula 
\[
\int \alpha \wedge \star_g \beta = \langle \alpha, \beta \rangle_g.
\]
We say a $2$-form $\alpha$ is self-dual if $\star_g \alpha = \alpha$. From the above formula we see that all self-dual harmonic $2$-forms under $\star_g$ form a positive definite rank $b_2^+$ subspace $H^+_g$. For any $2$-form $\alpha$ we define the self dual part $\alpha^+$ of $\alpha$ by $\alpha^+ = (\alpha +  \star_g\alpha)/2$. Although the definition of $\alpha^+$ depends on the choice of a metric, in this paper we will omit it and $\alpha^+$ should be understood as a self dual part of a $2$-form under a suitable metric (or a family of $2$-forms under a family of metrics).

For later usage we state the following proposition from the homotopy theory.
\begin{prop}\label{prop:fiber-homotopy-class}
Let $B$ be a circle and $\mathcal{P}$ be an $\S^1$-bundle over $B$. The fiberwise homotopy class of sections on $\mathcal{P}$, denoted by $[B,\mathcal{P}]_f$, can be identified with $\Z$ or $\Z/2$ noncanonically. It becomes canonical after choosing a trivialization of $\mathcal{P}$. 
\end{prop}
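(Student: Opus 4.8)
The plan is to classify $\mathcal{P}$ and then compute $[B,\mathcal{P}]_f$ by hand in each of the (two) cases. A fiber bundle over a circle is the mapping torus of its monodromy, and $\pi_0\,\mathrm{Diff}(\S^1)=\Z/2$ with the non-identity class represented by an orientation-reversing involution $r$; hence up to isomorphism $\mathcal{P}$ is either the trivial bundle $B\times\S^1$ or the Klein bottle, and in either case $\mathcal{P}\cong M_\phi:=(\S^1\times\R)/(x,t)\sim(\phi(x),t+1)$ with $B=\R/\Z$ and $\phi\in\{\id,r\}$. I would record this dichotomy first.

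Since $\phi$ is either the identity or the involution $r$, pulling $M_\phi\to B$ back along the universal cover $\R\to\R/\Z$ identifies a section of $\mathcal{P}\to B$ with a ``$\phi$-twisted loop'', i.e.\ a map $\gamma\colon\R\to\S^1$ with $\gamma(t+1)=\phi(\gamma(t))$, and a fiberwise homotopy of sections with a homotopy through such twisted loops; so $[B,\mathcal{P}]_f$ is the set of homotopy classes of $\phi$-twisted loops. For $\phi=\id$ this is $[\S^1,\S^1]$, which the winding number identifies with $\Z$ \emph{once} the fiber is oriented and a zero is chosen. A trivialization $\mathcal{P}\cong B\times\S^1$ supplies both---the zero being the constant sections in the product structure and the orientation being that of the target $\S^1$---while without such a choice one retains only the canonical structure of a torsor under the action of $\pi_1(\text{fiber})\cong\Z$; this is the asserted non-canonical identification.

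For $\phi=r$, write $\S^1=\R/\Z$ and $r(z)=-z$, so a section corresponds to $\gamma\colon\R\to\R/\Z$ with $\gamma(t+1)=-\gamma(t)$. Choosing a lift $\tilde\gamma\colon\R\to\R$, the twisting condition reads $\tilde\gamma(t+1)+\tilde\gamma(t)\in\Z$ for all $t$; being continuous and integer-valued this quantity is a constant $n(\gamma)\in\Z$, and replacing $\tilde\gamma$ by $\tilde\gamma+k$ replaces it by $n(\gamma)+2k$, so its residue $\bar n(\gamma)\in\Z/2$ depends only on $\gamma$. This $\bar n$ is a fiberwise-homotopy invariant, since a homotopy of sections lifts over the simply connected $\R\times[0,1]$ to a homotopy of the $\tilde\gamma$'s along which $n$ is a continuous $\Z$-valued function, hence constant. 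Conversely, if $\bar n(\gamma_0)=\bar n(\gamma_1)$ one may arrange lifts with $n(\gamma_0)=n(\gamma_1)=:n$, and then the affine homotopy $\tilde\gamma_s=(1-s)\tilde\gamma_0+s\tilde\gamma_1$ still satisfies $\tilde\gamma_s(t+1)+\tilde\gamma_s(t)=n$ and so descends to a fiberwise homotopy $\gamma_0\simeq\gamma_1$. Hence $\bar n$ is a bijection $[B,\mathcal{P}]_f\cong\Z/2$; the Klein bottle admits no trivialization, so the last clause of the Proposition is vacuous here. (Both computations also fall out of the general fact that, for an aspherical fiber, $[B,\mathcal{P}]_f$ is a nonempty torsor over $H^1\big(B;\underline{\pi_1(\text{fiber})}\big)$, which for $B=\S^1$ equals $\Z$ or $\coker(-2\colon\Z\to\Z)=\Z/2$ according to whether the monodromy acts trivially or by $-1$ on $\pi_1$.)

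I expect the one genuinely delicate point to be the careful formulation of ``non-canonical versus canonical after trivialization'': one should verify that in the trivial-bundle case the ambiguity in the identification with $\Z$ is exactly the choice of a zero section together with an orientation of the fiber, that a trivialization of $\mathcal{P}$ furnishes precisely these, and hence that the winding number used in the rest of the paper is a well-defined integer. A lesser point to keep track of is that the twisted-loop model computes \emph{fiberwise} homotopy classes of sections---homotopies covering the identity of $B$---rather than free homotopy classes of the underlying maps to $\mathcal{P}$.
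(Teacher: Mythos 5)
Your argument is correct, but it takes a genuinely different route from the paper. The paper invokes obstruction theory: fiberwise homotopy classes of sections are governed by difference classes in $H^{r}(B;\pi_r^{loc}(\mathcal{P}))$, and since the fiber is $\S^1$ only $r=1$ contributes, giving $H^1(\S^1;\underline{\Z})=\Z$ in the orientable case and $H^1$ with twisted coefficients $=\Z/2$ for the Klein bottle. You instead first classify the bundle by its monodromy in $\pi_0\,\mathrm{Diff}(\S^1)=\Z/2$, model sections as $\phi$-twisted loops via the universal cover of the base, and compute the two cases by hand: the degree for the trivial bundle, and the explicit lift-invariant $\bar n\in\Z/2$ for the Klein bottle, with the affine homotopy showing $\bar n$ is complete. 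What your approach buys is an elementary, self-contained proof that also produces the concrete invariant (a winding number, respectively its mod $2$ reduction) that the rest of the paper actually uses, and it makes the torsor structure and the role of a trivialization transparent; what the paper's approach buys is brevity and immediate generalization to higher-dimensional bases and fibers, since the obstruction-theoretic statement does not depend on the special classification of $\S^1$-bundles over $\S^1$. Two small points you should tidy up: in the Klein bottle case you verify that $\bar n$ is injective on classes but should also note surjectivity (the constant sections at $0$ and at $1/2$ realize both parities); and in the trivial case the identification of $[\S^1,\S^1]$ with $\Z$ by degree uses orientations of the base circle and of the fiber rather than a choice of zero per se---the zero section only enters when you want the torsor to become a group, which is exactly what a trivialization supplies, so your conclusion stands as stated.
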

\begin{proof}
For two sections of $\mathcal{P}$, the obstruction of the fiberwise homotopy is measured by a sequence of elements which live in $H^r(B;\pi_r^{loc}(\mathcal{P}))$, where $\pi_r^{loc}(\mathcal{P})$ is the local system of $r$-th homotopy groups of the fiber (\cite{Steenrod51}).  

Since the fiber of $\mathcal{P}$ is just $\S^1$, we only need to consider $r=1$. If $\mathcal{P}$ is orientable, $\pi_1^{loc}(\mathcal{P})$ is the constant sheaf $\underline \Z$ and $H^1(B;\pi_1^{loc}(\mathcal{P})) = \Z$. If $\mathcal{P}$ is a Klein bottle, $\pi_1^{loc}(\mathcal{P})$ is the twisted coefficient and $H^1(B;\pi_1^{loc}(\mathcal{P})) = \Z/2$ (see \cite{BT82} Exercise 10.7).
\end{proof}

\section{Winding number of parameter families}

\begin{definition}\label{def:S-g}
Let $B=\S^1$ be the parameter space and $X$ be a smooth closed oriented $4$-manifold with $b_2^+ = 2$. For any metric family $g$ indexed by $B$, the family of nonzero self-dual harmonic $2$-forms is homotopic to an $\S^1$-bundle
\[
\mathcal{S}_{g} := \bigsqcup_{b\in B} (H^+_{g(b)} - \{0\})
\]
over $B$. 
\end{definition}

Since in the construction that two diffeomorphisms have different family invariants, we cannot compare two chambers on the same manifold (see \cite{BK2020} Theorem 9.7 for the analogue in large $b^+_2$ case), we do need a canonical trivialization of the bundle $\mathcal{S}_{g}$ to identify ``absolute positions'' for all chambers: Choose a base point $P$ in $G_{b_2^+}^+(H^2(X;\R))$, then project every positive planes to $P$. This projection is nondegenerate. This gives a trivialization of $\mathcal{S}_{g}$.


A parameter family $(g,\mu)$ is regular if no reducible solutions occur. Let $\mathcal{H}$ be the harmonic projection of $2$-forms. Then a parameter family $(g,\mu)$ is regular if and only if $\mathcal{H}(\mu(b)) \neq 0$ for any $b\in B$.

\begin{definition}\label{def:winding-number}
Adopt the settings in Definition \ref{def:S-g}. If the family $E_X$ over $B$ has structure group that preserves the orientation of the positive cone of $H^{2}(X,\R)$, then the \textbf{winding number} $wind$ of a parameter family $(g,\mu)$ is defined by the degree of the projection of $\mathcal{H}(\mu)$ to the fiber of $\mathcal{S}_{g }$ (note that the trivialization of $\mathcal{S}_{g}$ is canonical). If $\mathcal{S}_{g}$ is nonorientable, the winding number of $(g,\mu)$ can be defined similarly, but with values in $\Z/2$.
\end{definition}

\begin{example}\label{example:perturbation-family-path-Z2}
If $X = Y \# (\S^2\times \S^2)$ with $b^+_2 (Y) =1$, and $E_X$ is the mapping torus of $id_Y\# r$ where $r$ sends $(a,b)\in H^2(\S^2\times \S^2)$ to $(-a,-b)$. Then there are two homotopy classes of purterbation families. They are represented by two paths in Figure \ref{fig:chamber-1}.
\begin{figure}[ht!]
    \begin{Overpic}{\includegraphics[scale=0.6]{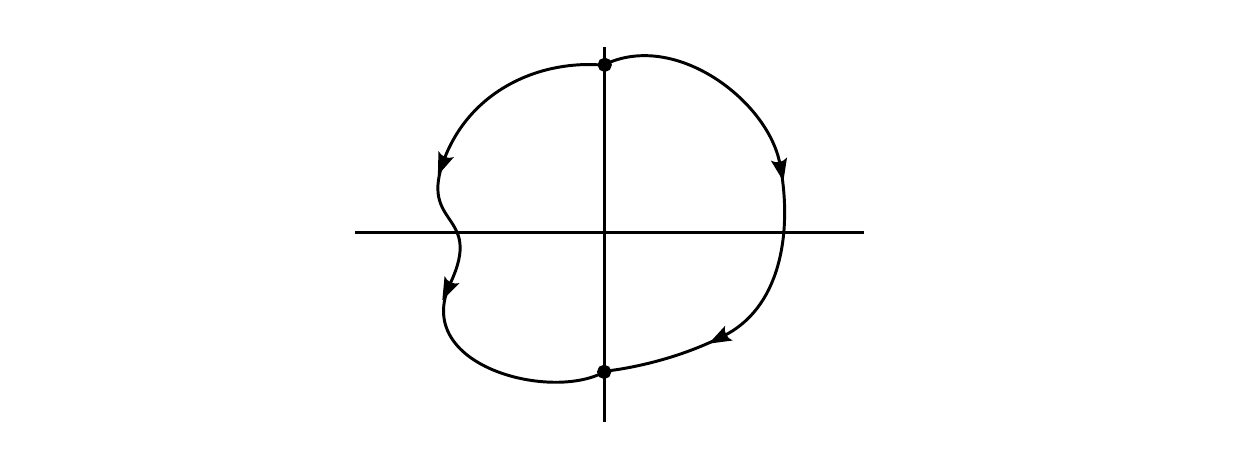}}
        \put(22,21){$H^{2,+}(Y)$}
        \put(44,0){$H^{2,+}(\S^2\times \S^2)$}
    \end{Overpic}
    \caption{}
    \label{fig:chamber-1}
\end{figure}
\end{example}

\subsection{Properties of the winding number}
To disprove the hypothesis that there exists a diffeomorphism of mapping tori, we have to first figure out how such diffeomorphisms can change the parameter family. It turns out that the winding numbeer is an invariant under the diffeomorphism of any $\S^1$-parameter family of positve index $2$ manifolds. 
\begin{lemma}
Let $B=\S^1$ be the parameter space and $X$ be a smooth closed oriented $4$-manifold with $b_2^+ = 2$. Let $E_X$ be a smooth family of $X$ over $B$. Let $(g,\mu)$ be a parameter family that doesn't cross any wall. Then for any diffeomorphism $f$ of $E_X$, 
\[
wind(g,\mu) = wind(f^*g,f^*\mu).
\]
\end{lemma}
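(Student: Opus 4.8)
The plan is to show that every structure entering the definition of $wind$ is natural under bundle automorphisms of $E_X$, so that the asserted identity reduces to the functoriality of the mapping degree. We may assume $f$ is a bundle automorphism of $E_X\to B$, covering a diffeomorphism $\bar f\colon B\to B$ --- this is in any case what is needed for $f^*g$ and $f^*\mu$ to again be parameter families indexed by $B$ (if $f$ preserves each fiber, then $\bar f=\mathrm{id}$). For $b\in B$ write $f_b\colon X_b\to X_{\bar f(b)}$ for the restriction of $f$ to fibers, so that $(f^*g)(b)=f_b^*\big(g(\bar f(b))\big)$ and $(f^*\mu)(b)=f_b^*\big(\mu(\bar f(b))\big)$ by definition. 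Each $f_b$ is an isometry from $\big(X_b,(f^*g)(b)\big)$ to $\big(X_{\bar f(b)},g(\bar f(b))\big)$, hence intertwines the associated Hodge star operators, harmonic projections, and therefore also the self-dual projections; in particular $\mathcal{H}\big((f^*\mu)(b)\big)\neq0$ whenever $\mathcal{H}\big(\mu(\bar f(b))\big)\neq0$, so $(f^*g,f^*\mu)$ also crosses no wall and both sides of the claimed equality are defined.

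First I would repackage this naturality at the level of the bundles $\mathcal{S}_{g}$. Since $f_b^*$ carries $H^+_{g(\bar f(b))}\subset H^2(X_{\bar f(b)};\R)$ isomorphically onto $H^+_{(f^*g)(b)}\subset H^2(X_b;\R)$, the family $\{f_b^*\}_{b\in B}$ assembles into a bundle isomorphism $F\colon\bar f^*\mathcal{S}_{g}\xrightarrow{\cong}\mathcal{S}_{f^*g}$ over $B$. Let $s_{g,\mu}\colon B\to\mathcal{S}_{g}$ be the nowhere-zero section $b\mapsto\mathcal{H}(\mu(b))^+$, whose degree read off in the canonical trivialization of $\mathcal{S}_{g}$ is by definition $wind(g,\mu)$. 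Naturality of the harmonic and self-dual projections under each $f_b$ gives $F\circ(\bar f^*s_{g,\mu})=s_{f^*g,f^*\mu}$. So, modulo identifying trivializations, $wind(f^*g,f^*\mu)$ is the degree of the pulled-back section $\bar f^*s_{g,\mu}$.

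The key point --- and the step I expect to be the genuine obstacle --- is to check that $F$ carries the canonical trivialization of $\mathcal{S}_{f^*g}$ to the $\bar f$-pullback of the canonical trivialization of $\mathcal{S}_{g}$, up to homotopy. Recall that this trivialization is built from a fixed positive plane $P\in G^+_{b_2^+}(H^2(X;\R))$ by orthogonal projection; to make ``$P$'' meaningful inside the varying spaces $H^2(X_b;\R)$ over $B=\S^1$ one transports it by the flat (monodromy) structure of the local system $b\mapsto H^2(X_b;\R)$, which is possible and unique up to homotopy because $G^+_{b_2^+}(H^2(X;\R))$ is contractible, and the resulting circle bundle $\mathcal{S}_g$ is orientable --- so that $wind$ is $\Z$-valued --- precisely by the standing hypothesis that the structure group of $E_X$ preserves the orientation of the positive cone. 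Because $f$ is a bundle automorphism, the system $\{f_b^*\}$ respects this flat structure, sends the transported family determined by $P$ to the one determined by $f_*P$, and preserves the chosen orientation; contractibility of $G^+_{b_2^+}(H^2(X;\R))$ then forces these two positive-plane families to be homotopic, so the two trivializations of $\mathcal{S}_{f^*g}$ agree up to homotopy. (In the nonorientable case one runs the identical argument with $\Z/2$-coefficients, where nothing further needs checking.)

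With this compatibility in hand the conclusion is immediate: $wind(f^*g,f^*\mu)$ equals the degree of $\bar f^*s_{g,\mu}\colon\S^1\to\S^1$, which is $\deg(\bar f)\cdot wind(g,\mu)$; and since $\bar f$ preserves the orientation of $B$ in the setting at hand (automatically so if $f$ is fiber-preserving), $\deg(\bar f)=1$ and $wind(f^*g,f^*\mu)=wind(g,\mu)$. The only substantive ingredient is the third paragraph --- that the reference plane $P$ together with its flat transport around the circle is insensitive, up to homotopy, to the action of $f$ --- and the assumption that the structure group preserves the orientation of the positive cone enters precisely there, to confine all of the auxiliary choices to one contractible space of positive planes, where none of them affects a degree.
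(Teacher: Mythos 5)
First, a point of comparison that is out of your control: the paper does not actually prove this lemma. Immediately after the statement the author writes that when $b_2^-$ is small the action of a diffeomorphism on parameter families is not needed, ``hence we omit the proof of the above lemma,'' and the lemma is never invoked later. So there is no argument in the paper to measure yours against; what can be assessed is whether your argument closes the statement as written, and it does not quite. Your overall scheme (naturality of the Hodge-theoretic data under the fiberwise isometries, assembling $\{f_b^*\}$ into a bundle isomorphism $F\colon \bar f^*\mathcal{S}_g\to\mathcal{S}_{f^*g}$, and then comparing the canonical trivializations via contractibility of $G^+_{b_2^+}(H^2(X;\R))$) is the natural one, and the first two paragraphs are sound.

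The genuine gap is in the orientation bookkeeping, which you dispose of by assertion rather than proof, and which is in fact where the statement itself fails as written. The lemma allows \emph{any} diffeomorphism $f$ of $E_X$, and in the $\Z$-valued case the winding number is only invariant up to the sign $\deg(\bar f)\cdot\epsilon(f)$, where $\epsilon(f)=\pm1$ records whether the fiberwise action of $f$ on $H^2$ preserves the orientation of the positive cone. Your third paragraph asserts that $\{f_b^*\}$ ``preserves the chosen orientation,'' but the hypothesis in Definition \ref{def:winding-number} constrains the structure group of the family $E_X$, not the diffeomorphism $f$; and your last paragraph asserts $\deg(\bar f)=1$ ``in the setting at hand,'' which is likewise not among the hypotheses. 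Both failures occur: for the product family $E_X=X\times S^1$ with a constant metric and a perturbation family winding $k\neq0$ times, the diffeomorphism $\mathrm{id}_X\times(\text{reflection of }S^1)$ pulls the family back to one of winding $-k$; and for the same trivial family with fiber $X=(S^2\times S^2)\#(S^2\times S^2)$, the fiber-preserving diffeomorphism $\sigma\times\mathrm{id}_{S^1}$, with $\sigma$ the factor swap (which acts on the rank-two positive part with determinant $-1$), again sends winding $k$ to $-k$. So either the lemma must be read with additional hypotheses (that $f$ covers an orientation-preserving map of $B$ and acts preserving the orientation of the positive cone, or that one works with the $\Z/2$-valued winding number), or the conclusion should be $wind(g,\mu)=\pm\,wind(f^*g,f^*\mu)$. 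Your proof would be essentially complete under those added hypotheses, but as a proof of the statement as literally given it has a hole exactly at the step you yourself identify as ``the genuine obstacle.''
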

It turns out that when $b^-_2$ is not so large, we don't really need to know how a diffeomorphism acts on the set of parameter families. Hence we omit the proof of the above lemma.

For each $g$, the harmonic projection gives an identification between the period bundle in \cite{LL01} Section 3.1 and the bundle $\mathcal{S}_g$, so the winding number actually determines the chamber:

\begin{lemma}\label{lem:szabo-lemma-3.1}
Let $X$ be a smooth closed oriented $4$-manifold with $H_1(X,\Z)=0$ and $b_2^+(X) = 2$. Fix a homology orientation of $H^{2,+}(X, \R)$. Let $E_X$ be a family of $X$ over the circle $B$. For each Riemannian metric family $g$ let the bundle $\mathcal{S}_{g} $ be the one defined in Definition \ref{def:S-g} with the orientation compatible with the homology orientation. Then for each characteristic element $K\in H^2(X;\Z)$ with $(K^2 -2e(X) -3sign(X))/4 \ge 0$ we have
\begin{enumerate}
\item[\textbullet] If the winding number of $(g_1, 2\pi K^+ + \mu_1)$ is equal to the winding number of $(g_2, 2\pi K^+ + \mu_2)$, then 
\[
FSW(E_X, K, g_1 ,\mu_1 ) =FSW(E_X, K, g_2 ,\mu_2 )
\] 
\end{enumerate}
\end{lemma}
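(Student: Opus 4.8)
The plan is to reduce the equality of family Seiberg–Witten invariants to the statement that the two parameter families $(g_1, 2\pi K^+ + \mu_1)$ and $(g_2, 2\pi K^+ + \mu_2)$ are fiberwise homotopic through regular (wall-avoiding) families, and then invoke the cobordism invariance of $FSW$ for regular homotopies. First I would recall that, by Proposition \ref{prop:fiber-homotopy-class} together with the canonical trivialization of $\mathcal{S}_g$ described after Definition \ref{def:S-g} (projection to a fixed base plane $P \in G_{b_2^+}^+(H^2(X;\R))$), the fiberwise homotopy class of a regular perturbation family over $B = \S^1$ is detected exactly by an element of $\Z$ (or $\Z/2$ in the nonorientable case), and that this element is by Definition \ref{def:winding-number} the winding number. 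So the first step is: two regular families over the \emph{same} metric family $g$ with the same winding number are fiberwise homotopic through regular families.

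The main technical point is that the metric families $g_1$ and $g_2$ are a priori different, so $\mathcal{S}_{g_1}$ and $\mathcal{S}_{g_2}$ are different bundles, and one cannot directly compare winding numbers across them without the canonical trivialization. The key is precisely the canonical trivialization: since the space of metrics on $X$ is contractible, the family of metrics $g_1$ is homotopic to $g_2$ through metric families $g_t$, $t \in [0,1]$, and along this homotopy the harmonic projection $\mathcal{H}$ and the projection-to-$P$ trivialization vary continuously, so the bundles $\mathcal{S}_{g_t}$ fit into a single $\S^1$-bundle over $B \times [0,1]$ with a global trivialization restricting to the canonical ones on the ends. One must check that the period bundle identification of \cite{LL01} Section 3.1 is compatible with this, i.e. that $\mathcal{H}$ identifies the period point of $2\pi K^+ + \mu_i$ in $\mathcal{S}_{g_i}$ with the data entering $FSW$; this is where the hypothesis $H_1(X;\Z) = 0$ enters, guaranteeing that reducibles are cut out by the single equation $\mathcal{H}(\mu(b)) = 0$ and that the period bundle has the stated form. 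Given this, a family with winding number $w$ over $g_1$ and a family with winding number $w$ over $g_2$ are joined by a homotopy over $B \times [0,1]$ that stays regular: the obstruction to keeping it regular (i.e. avoiding the wall $\mathcal{H}(\mu) = 0$) is the difference of winding numbers measured in the global trivialization, which vanishes by hypothesis.

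With the regular homotopy in hand, the final step is standard gauge theory: a regular family over $B \times [0,1]$ connecting the two parameter families gives a cobordism between the two parameterized moduli spaces, and since the characteristic element $K$ is fixed and $(K^2 - 2e(X) - 3\,\mathrm{sign}(X))/4 \ge 0$ guarantees the moduli spaces have the expected (nonnegative) dimension with no reducibles along the way, the signed (or $\Z/2$) count $FSW(E_X, K, g_i, \mu_i)$ is unchanged. Hence
\[
FSW(E_X, K, g_1, \mu_1) = FSW(E_X, K, g_2, \mu_2).
\]
I expect the main obstacle to be the careful bookkeeping in the second step: verifying that the canonical trivializations of $\mathcal{S}_{g_1}$ and $\mathcal{S}_{g_2}$ are genuinely compatible through the contractibility homotopy of the metric space in a way that matches the orientation conventions coming from the fixed homology orientation of $H^{2,+}(X;\R)$, and that the winding number as defined via $\mathcal{H}(\mu)$ agrees with the period-bundle degree of \cite{LL01}. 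Once that compatibility is pinned down, everything else is a routine application of the wall-crossing/cobordism formalism.
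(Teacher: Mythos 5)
Your proposal is correct and follows essentially the same route as the paper: use contractibility of the space of metrics to join $g_1$ to $g_2$, use the canonical projection-to-$P$ trivialization to compare winding numbers across the varying bundles $\mathcal{S}_{g_t}$, conclude that equal winding numbers yield a wall-avoiding (regular) homotopy of parameter families via Proposition \ref{prop:fiber-homotopy-class}, and then invoke invariance of $FSW$ along such a homotopy. The only cosmetic difference is that where you phrase the middle step as an obstruction-theory extension problem over $B\times[0,1]$, the paper writes down an explicit compensating perturbation $\mu(b,t)$ (using the frame identification $i_{b,t}$) that keeps the period point fixed in the trivialization; the content is the same.
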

\begin{proof}
It suffices to find a homotopy from $(g_1,\mu_1)$ to $(g_2,\mu_2)$ without crossing any wall. Since the space of all metrics is contractible, there exsists a homotopy $g(b,t)$ between the loops $g_1$ and $g_2$.  To avoid $2\pi K^+ + \mathcal{H}(\mu_1)$ being zero during this process, we may change the perturbation family by the following procedure:

For each $g(b,t)$, the projection of the base plane $P$ to $H^+_{g(b,t)}$ is nondegenerate. Hence a frame in $P$ will induce a smooth family of frames on $H^+_{g}$. For each $g(b,t)$, the pairing between the attached frame on $H^+_{g(b,t)}$ and an element in $H^+_{g(b,t)}$ gives an element in $\R^2$ (this identification is compatible with the previous trivialization of $\mathcal{S}_g$). Denote this identification by 
\[
i_{b,t}:H^+_{g(b,t)} \to \R^2.
\]
Let
\[
\mu(b,t) := \mu_1(b) + 2\pi(i_{b,t}^{-1} i_{b,0}K^{+_{g(b,0)}} -K^{+_{g(b,t)}})
\]
where $K^{+_{g(b,t)}}$ is the self dual part of $K$ with respect to the netric $g(b,t)$. Then $(g(b,t),\mu(b,t))$ gives a homotopy from $(g_1,\mu_1)$ to $(g_2,\mu_1')$, and $2\pi K^{+_g} + \mathcal{H}(\mu)$ is always nonzero. Moreover this homotopy preserves the winding number since $i_{b,t}\mu(b,t) $ is actually unchanged. This means that
\[
wind(g_2, 2\pi K^+ + \mu_1') = wind(g_1, 2\pi K^+ + \mu_1) =wind(g_2, 2\pi K^+ + \mu_2).
\]
Then by Proposition \ref{prop:fiber-homotopy-class}, $(g_2,\mu_1')$ is fiberwise homotopy equivalent to $(g_2,\mu_2)$. The concatenation of these two homotopies is a homotopy from $(g_1,\mu_1)$ to $(g_2,\mu_2)$ without crossing any wall. This proves that 
\[
FSW(E_X, K, g_1 ,\mu_1 ) =FSW(E_X, K, g_2 ,\mu_2 ).
\]

\end{proof}

\section{Proof of the main theorem}
If the characteristic element $K\in H^2(X,\Z)$ is not in the positive cone of $H^2(X,\Z)$, there is a risk that the metric family $g$ indexed over a circle would make $K^+$ wind around the origin of $H^+_g$. This means that the chamber of the parameter family may depend on  $g$, which is something we cannot control in the diffeomorphism. We have to assume some conditions on $b_2^-(X)$ to avoid this situation: 
\begin{lemma}\label{lem:szabo-lemma-3.2}
Let $X$ be a smooth closed oriented $4$-manifold with $H_1(X,\Z)=0$, $b_2^+(X) = 2$ and $b_2^-(X) \leq 10$. Let $E_X$ be a family of $X$ over a circle $B$. Then for every characteristic element $K\in H^2(X,\Z)$, family of metrics $g_1$, $g_2$, and sufficiently small (with respect to $K^+$) perturbation family $\mu_1$, $\mu_2$, 
we have 
\[
FSW(E_X, K, g_1 ,\mu_1 ) =FSW(E_X, K, g_2 ,\mu_2 ).
\] 
\end{lemma}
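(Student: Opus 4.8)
The goal is to remove the dependence of $FSW$ on the metric family by showing that, under the hypothesis $b_2^-(X)\le 10$, the perturbation $2\pi K^+$ never forces a nonzero winding number coming from the metric loop alone, so that Lemma~\ref{lem:szabo-lemma-3.1} applies. The plan is to reduce the statement to a computation of the winding number of the section $b\mapsto 2\pi K^{+_{g(b)}}$ of $\mathcal{S}_g$ and to show this winding number is forced to vanish (or to be independent of $g$) for dimension reasons once $b_2^-\le 10$. First I would fix the trivialization of $\mathcal{S}_g$ by projecting to the base plane $P\subset G_2^+(H^2(X;\R))$ as in Definition~\ref{def:winding-number}, so that $K^{+_{g(b)}}$ becomes a loop $\gamma_g\colon B\to \R^2$, and the claim becomes: $\gamma_{g_1}$ and $\gamma_{g_2}$ are homotopic through loops avoiding the origin, i.e.\ have the same winding number about $0\in\R^2$.

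Next I would analyze when $\gamma_g$ can wind. The image of $K$ under harmonic projection is a fixed cohomology class $\bar K\in H^2(X;\R)$ with $\bar K^2 = K^2$, and $K^{+_{g(b)}}$ is its orthogonal projection onto the positive plane $H^+_{g(b)}$; under the fixed trivialization this is the value of a fixed linear functional determined by $\bar K$ on the moving plane. The key point is that $\gamma_g(b)=0$ exactly when $\bar K\perp H^+_{g(b)}$, i.e.\ $\bar K$ lies in the negative-definite subspace $H^-_{g(b)}$, which requires $\bar K^2 = K^2 < 0$. For characteristic $K$ with $(K^2-2e(X)-3\,\mathrm{sign}(X))/4\ge 0$ the relevant Seiberg--Witten moduli space is nonempty, and here $K^2 \ge 2e+3\,\mathrm{sign} = 2(2+b_2^++b_2^-) + 3(b_2^+-b_2^-) = 4 + 5b_2^+ - b_2^- = 14 - b_2^-$; so $b_2^-\le 10$ forces $K^2 \ge 4 > 0$. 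Then $\bar K$ is a nonzero vector of positive square, it can never lie in a negative-definite subspace, so $\gamma_g$ is a loop in $\R^2\setminus\{0\}$ for every metric loop $g$, and its winding number is a well-defined integer invariant of the homotopy class of $g$.

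Then I would use contractibility of the space of metrics to conclude. Since the space of all Riemannian metrics on $X$ is contractible, any two metric loops $g_1,g_2$ over $B$ are homotopic through a family $g(b,t)$; along this homotopy each $\bar K$ still has $\bar K^2 = K^2 > 0$, so $\gamma_{g(\cdot,t)}$ stays in $\R^2\setminus\{0\}$ for all $t$, giving a homotopy between $\gamma_{g_1}$ and $\gamma_{g_2}$ in $\R^2\setminus\{0\}$. Hence $wind(g_1, 2\pi K^+ + \mu_1) = wind(g_2, 2\pi K^+ + \mu_2)$ once $\mu_1,\mu_2$ are small enough relative to $K^+$ that adding $\mathcal{H}(\mu_i)$ does not change the homotopy class of the loop (this is exactly the ``sufficiently small'' hypothesis, and it is where one invokes that $\mathcal{H}(\mu_i)$ is a small perturbation of the nonvanishing loop $2\pi K^{+_{g_i}}$). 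Applying Lemma~\ref{lem:szabo-lemma-3.1} then yields the equality of the family Seiberg--Witten invariants; if $\mathcal{S}_g$ is nonorientable one runs the same argument with $\Z/2$ coefficients.

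**Main obstacle.** The routine parts are the index bookkeeping and contractibility of the metric space; the real subtlety is making precise, and metric-uniform, the ``sufficiently small $\mu$'' condition. One needs a lower bound on $\|2\pi K^{+_{g(b,t)}}\|$ that is uniform over the compact parameter square $B\times[0,1]$ — this follows from positivity of $K^2$ and compactness, but must be stated carefully so that a single smallness threshold for $\mu_1,\mu_2$ works simultaneously along the whole homotopy, ensuring no wall is crossed. I also need to check that the winding-number/chamber correspondence of Lemma~\ref{lem:szabo-lemma-3.1} is insensitive to the choice of base plane $P$, or else track that choice through the homotopy; this is where a small amount of care with the trivialization is required.
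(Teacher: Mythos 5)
Your proposal follows essentially the same route as the paper: use the dimension constraint together with $b_2^+(X)=2$, $b_2^-(X)\le 10$ to force $K^2\ge 0$, conclude that $2\pi K^{+_{g(b)}}$ stays uniformly away from the wall so that sufficiently small $\mu_1,\mu_2$ cannot change the winding number, and then apply Lemma \ref{lem:szabo-lemma-3.1}. The only differences are cosmetic: you make the metric-contractibility homotopy and the compactness/uniformity of the smallness threshold explicit (the paper leaves these implicit), and your index bookkeeping uses the hypothesis $(K^2-2e-3\,\mathrm{sign})/4\ge 0$ of Lemma \ref{lem:szabo-lemma-3.1} rather than the paper's ``parameterized formal dimension $\ge 0$, ordinary dimension $-1$'' count, but either version yields the needed inequality $K^2\ge 0$.
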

\begin{proof}
Let $K \in H^2(X;\Z)$ be a characteristic element for which the formal dimension of the parameterized moduli space is at least $0$. This means that the formal dimension of the ordinary moduli space is $-1$. Then 
\[
2e(X) + 3\text{sign}(X) = 4 +5b_2^+(X) - b^-_2(X) \geq 4
\]
implies $K^2 \geq 0$. Hence 
the distance between $2\pi K$ and the subspace perpendicular to $H^+_g \subset H^2(X;\R)$ has a lower bound. As a corollary we have that for small enough $\mu_1$ and $\mu_2$,
\[
wind(g_1, 2\pi K^+ -\mu_1) = wind(g_2, 2\pi K^+ -\mu_2).
\]
Now Lemma \ref{lem:szabo-lemma-3.2} follows from Lemma \ref{lem:szabo-lemma-3.1}.
\end{proof}

Now we can generalize the exotic diffeomorphims found in \cite{BK2020} Theorem 9.7 to our case:
\begin{theorem}\label{BK-Thm-9.7}
Let $M$ and $M'$ be two closed oriented simply connected smooth $4$-manifolds with indefinite intersection forms. Suppose that $\phi: M\to M'$ is a homeomorphism. Suppose that $b_2^+(M) = 1, b_2^-(M) \leq 9$ and that $\ss_M$ is a $\text{Spin}^c$ -structure on $M$ with $d(M,\ss_M) = 0$. We make the following assumptions:
\begin{enumerate}
\item[1)] Suppose that for any matric $g_M$ and sufficiently small (with respect to $c_1(\ss_M)^+$) perturbation $\mu_M$, all solutions on $M$ are irreducible and $SW(M, \ss_M, g_M, \mu_M)$ has the same value. Similarly, suppose that for any matric $g_{M'}$ and sufficiently small perturbation $\mu_{M'}$, all solutions on $M'$ are irreducible and $SW({M'}, \phi(\ss_M), g_{M'}, \mu_{M'})$ is well defined.; 
\item[2)]  Suppose that 
\begin{equation}\label{equ:assumption-sw-nonequal}
SW(M, \ss_M, g_M, \mu_M) \neq SW({M'}, \phi(\ss_M), g_{M'}, \mu_{M'}) \mod 2;
\end{equation}
\item[3)] Lastly, suppose that $M\# (\S^2\times \S^2)$ is diffeomorphic to $M'\# (\S^2\times \S^2)$.
\end{enumerate}
Then $M\# (\S^2\times \S^2)$ admits an exotic diffeomorphism. 
\end{theorem}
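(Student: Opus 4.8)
The plan is to run the construction of Ruberman \cite{Ruberman1998AnOT} and Baraglia--Konno \cite{BK2020} on $X := M\#(\S^2\times\S^2)$, but to replace the automatic chamber-independence of the family invariant (which holds there because $b_2^+\ge 3$) by Lemma~\ref{lem:szabo-lemma-3.2}, which applies precisely because $b_2^-(X) = b_2^-(M)+1 \le 10$. Note that $X$ is closed, oriented, simply connected, with indefinite intersection form and $b_2^+(X) = 2$, so the results of Sections~2--3 apply; write $X' := M'\#(\S^2\times\S^2)$, which is diffeomorphic to $X$ by assumption~3). Take the $\mathrm{Spin}^c$-structure $\ss := \ss_M\#\ss_0$ on $X$, where $\ss_0$ is the $\mathrm{Spin}^c$-structure on $\S^2\times\S^2$ with $c_1(\ss_0)=0$. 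Then $c_1(\ss)^2 = c_1(\ss_M)^2 = 9 - b_2^-(M) = 10 - b_2^-(X) \ge 0$ (using $d(M,\ss_M)=0$), and, using $d(M,\ss_M)=0$ again, $d(X,\ss) = d(M,\ss_M) + d(\S^2\times\S^2,\ss_0) + 1 = -1$, so the parameterized moduli space over $B=\S^1$ has formal dimension $0$; the same holds on $X'$ for $\phi(\ss_M)\#\ss_0$, since $\phi$ is a homeomorphism. As $c_1(\ss_M)^2\ge 0$ and $b_2^-(M)\le 9$, the small-perturbation invariant $SW(M',\phi(\ss_M),g_{M'},\mu_{M'})$ is, like its counterpart for $M$, independent of the metric, so by assumptions~1)--2) we obtain two honest integers whose reductions mod~$2$ are distinct.

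First I would construct the diffeomorphism. Let $r$ be an involution of $\S^2\times\S^2$ that is the identity on a $4$-ball and acts by $-\id$ on $H^2(\S^2\times\S^2;\Z)$ (as in Example~\ref{example:perturbation-family-path-Z2}), and set $\widetilde r_M := \id_M\# r$, $\widetilde r_{M'} := \id_{M'}\# r$. By Wall's realization theorem for diffeomorphisms of $4$-manifolds, applicable because $X$ contains an $\S^2\times\S^2$ summand, after composing the diffeomorphism $X\xrightarrow{\sim}X'$ of assumption~3) with a self-diffeomorphism of $X$ we may assume it is a diffeomorphism $\Psi\colon X\to X'$ with $\Psi_* = (\phi\#\id)_*$ on $H_2$; in particular $\Psi_*$ carries the splitting $H_2(M)\oplus H_2(\S^2\times\S^2)$ to $H_2(M')\oplus H_2(\S^2\times\S^2)$ blockwise and carries $\ss$ to $\phi(\ss_M)\#\ss_0$. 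Put $f_0 := \widetilde r_M$ and $f_1 := \Psi^{-1}\circ\widetilde r_{M'}\circ\Psi$. Since $(\phi\#\id)_*$ is block diagonal, a direct computation gives $(f_1)_* = \Psi_*^{-1}(\widetilde r_{M'})_*\Psi_* = (\widetilde r_M)_* = (f_0)_*$ on $H_2(X)$; hence $f := f_1\circ f_0^{-1}$ acts as the identity on $H_2(X)$ and is therefore topologically isotopic to $\id_X$ by Quinn's isotopy theorem for simply connected closed $4$-manifolds.

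The core of the proof is to compute the family invariants of the mapping tori $E_{f_0}$ and $E_{f_1}$. Conjugation by $\Psi$ identifies $(E_{f_1},\ss)$ with the mapping torus of $\widetilde r_{M'}$ carrying $\phi(\ss_M)\#\ss_0$, while $E_{f_0}$ is the mapping torus of $\widetilde r_M$ carrying $\ss$. I would then invoke the family gluing formula for a fibrewise connected sum with $\S^2\times\S^2$ along the reflection $r$ \cite{BK2020}, combined with the $b_2^+=1$ wall-crossing formula \cite{LL01}, applied in the small-perturbation chamber, to get
\[
FSW(E_{f_0},\ss) \equiv SW(M,\ss_M) , \qquad FSW(E_{f_1},\ss) \equiv SW(M',\phi(\ss_M)) \pmod{2};
\]
the $\mathcal{S}_g$-bundle attached to the reflection monodromy is a Klein bottle, so $FSW$ takes values in $\Z/2$ here and the ambiguous signs drop out. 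This is where $b_2^+(X)=2$ is felt: a priori each $FSW(E_{f_i},\ss)$ depends on the parameter family, and one cannot compare parameter families on the two different bundles $E_{f_0}$ and $E_{f_1}$. The point is that Lemma~\ref{lem:szabo-lemma-3.2} --- whose hypotheses $H_1(X;\Z)=0$, $b_2^+(X)=2$, $b_2^-(X)\le 10$ all hold --- says that for each of these bundles the value of $FSW$ for sufficiently small perturbations does not depend on the metric family or on the perturbation, so it is a single element of $\Z/2$; thus the displayed identities are unambiguous, and by assumptions~1)--2) they yield $FSW(E_{f_0},\ss)\ne FSW(E_{f_1},\ss)$ in $\Z/2$.

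Finally: were $f = f_1\circ f_0^{-1}$ smoothly isotopic to $\id_X$, then $f_1$ would be smoothly isotopic to $f_0$, whence $E_{f_1}\cong E_{f_0}$ as smooth $X$-bundles over $\S^1$, and then (again by Lemma~\ref{lem:szabo-lemma-3.2}) their small-perturbation family invariants would coincide, contradicting the previous step. So $f$ is topologically isotopic to $\id_X$ but not smoothly isotopic to it, i.e.\ $X = M\#(\S^2\times\S^2)$ admits an exotic diffeomorphism. I expect the main work to be the family gluing formula in the $b_2^+=2$ setting: both proving it and verifying that the parameter families it manufactures are ``small'' in the sense of Lemma~\ref{lem:szabo-lemma-3.2}, since without that lemma the two family invariants could not be legitimately compared. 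By comparison, the construction of $f$ and the homology bookkeeping are routine.
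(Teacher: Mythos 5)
Your proposal is correct and follows essentially the same route as the paper: the same construction of the two diffeomorphisms via the reflection $r$, Wall's and Quinn's theorems, the gluing formula of \cite{BK2020} reducing the mod 2 family invariants of the two mapping tori to the ordinary small-perturbation invariants of $M$ and $M'$, and Lemma \ref{lem:szabo-lemma-3.2} to make the comparison of chambers legitimate in the $b_2^+=2$ setting and to derive the contradiction from a hypothetical smooth isotopy.
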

\begin{proof}
Let $\ss_0$ be the $\text{Spin}^c$ -structure on $\S^2\times \S^2$ which has trivial determinant line. Let $r$ be the reflection of $\S^2\times \S^2$ in Example \ref{example:perturbation-family-path-Z2}.

Let $\phi' : M\# (\S^2\times \S^2) \to M'\# (\S^2\times \S^2)$ be a homeomorphism induced by $\phi$ and $id_{\S^2\times \S^2}$. Since $M$ is indeinite, there exsists a diffeomorphism $\psi : M\# (\S^2\times \S^2) \to M'\# (\S^2\times \S^2)$ such that $\psi$ and $\phi'$ induce the same map on cohomology groups (see \cite{Wall64}). Then we have
\[
\psi(\ss_M\# \ss_0) = \phi(\ss_M)\#\ss_0.
\]

Define \[
f_1 := id_M\# r: M\# (\S^2\times \S^2)\to  M\# (\S^2\times \S^2)
\] 
and 
\[
f_2 := \psi^{-1} \circ (id_{M'} \# r ) \circ \psi:M\# (\S^2\times \S^2)\to  M\# (\S^2\times \S^2).
\]
We will show that they are continuously isotopic but not smoothly isotopic. Then $f_1\circ f_2^{-1}$ will be an exotic diffeomorphism.

Since $r$ is a reflection, it induces the identity on $H^2(\S^2\times \S^2;\Z/2)$, and therefore so does $id_{M'} \# r $ on the $H^2(\S^2\times \S^2;\Z/2)$-summand of $H^2(M'\# (\S^2\times \S^2);\Z/2)$. On the other hand $\psi^* = (\phi')^*$ acts on the $H^2(M';\Z/2)$ summand of $H^2(M'\# (\S^2\times \S^2);\Z/2)$. Hence $f_1$ and $f_2$ induce the same map on $H^2(M\# (\S^2\times \S^2);\Z/2)$. Therefore by \cite{Quinn86}, $f_1$ and $f_2$ are continuously isotopic. 

Define \[
f_1' := id_{M'}\# r: M'\# (\S^2\times \S^2)\to  M'\# (\S^2\times \S^2).
\] 
Let $E_1$, $E_1'$ and $E_2$ be the mapping tori of $f_1$, $f_1'$ and $f_2$. Choose $g_2$, $\mu_2$ to be any generic family of parameters on $E_2$. Note that $\psi$ gives a bundle isomorphism between $E_1'$ and $E_2$. Hence 
\begin{align}\label{equ:bundle-iso}
\text{FSW}^{\Z/2} (E_2, \ss_M \# \ss_0, g_2, \mu_2) &= 
 \text{FSW}^{\Z/2} (E_1', \psi(\ss_M \# \ss_0), \psi(g_2), \psi(\mu_2)) \\
 &=  \text{FSW}^{\Z/2} (E_1', \phi(\ss_M) \# \ss_0, \psi(g_2), \psi(\mu_2)).\notag
\end{align}

Choose $g_1$, $\mu_1$ to be any generic family of parameters on $E_1$ with $\mu_1$ small enough (with respect to $c_1(\ss_M)^+$). By the gluing construction \cite{BK2020}, the parameterized moduli space of $E_1$ comes from gluing an irreducible solution on $M$ and a reducible solution on $\S^2\times \S^2$, which locates on a parameter where the self-dual $2$-form on $\S^2\times \S^2$ is $0$ (see Figure \ref{fig:wall-crossing}). Similarly, choose $g_2$, $\mu_2$ to be any generic family of parameters on $E_2$ with $\mu_2$ small enough (with respect to $c_1(\ss_{M})^+$).
\begin{figure}[ht!]
    \begin{Overpic}{\includegraphics[scale=0.6]{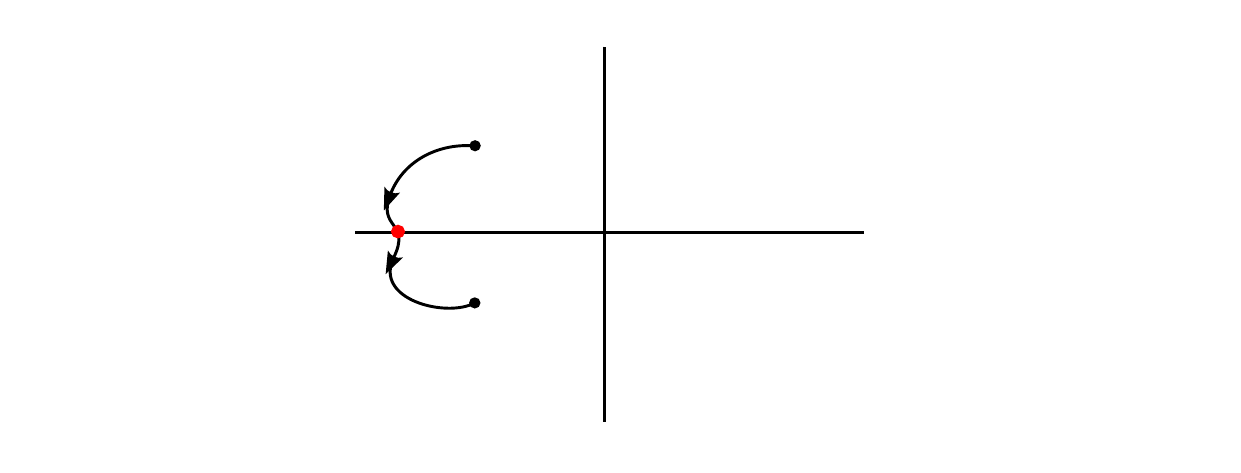}}
        \put(17,18){$H^{2,+}(M)$}
        \put(44,0){$H^{2,+}(\S^2\times \S^2)$}
    \end{Overpic}
    \caption{The only solutions on $\S^2\times \S^2$ are reducible by the dimension reason and therefore locate on the point with $0$ perturbation on $H^{2,+}(\S^2\times \S^2)$, and after the gluing, the parameterized moduli space of $E_1$ locates on nearby parameters.}
    \label{fig:wall-crossing}
\end{figure}

Hence we have 
\begin{align*}
\text{FSW}^{\Z/2} (E_1, \ss_M \# \ss_0, g_1, \mu_1) &= SW(M, \ss_M, g_1(b)|_M, \mu_1(b)|_M) \mod 2 \\
\text{FSW}^{\Z/2} (E_1', \phi(\ss_M) \# \ss_0, \psi(g_2), \psi(\mu_2)) &= SW(M', \phi(\ss_M) , \psi(g_2)(b')|_{M'}, \psi(\mu_2)(b')|_{M'}) \mod 2
\end{align*}
for some $b$ in the base of $E_1$, and some $b'$ on the base of $E_1'$ by the gluing results. Since $\mu_1$, $\mu_2$ are much smaller than $c_1(\ss_M)^+$, we have $\mu_1(b)|_M << c_1(\ss_M)^+$ and $\psi(\mu_2)(b')|_{M'} << \phi(c_1(\ss_M))^+ = c_1(\phi(\ss_M))^+$. Therefore, by Assumption (1) and (2) on the ordinary Seiberg-Witten invariant, we have 
\[
\text{FSW}^{\Z/2} (E_1, \ss_M \# \ss_0, g_1, \mu_1)  \neq \text{FSW}^{\Z/2} (E_1', \phi(\ss_M) \# \ss_0, \psi(g_2), \psi(\mu_2)).
\]
Combine this and (\ref{equ:bundle-iso}) we have 
\[
\text{FSW}^{\Z/2} (E_1, \ss_M \# \ss_0, g_1, \mu_1)  \neq \text{FSW}^{\Z/2} (E_2, \ss_M \# \ss_0, g_2, \mu_2).
\]
Suppose that $f_1$ is smoothly isotopic to $f_2$, then the isotopy forms a bundle isomorphism $H$ from $E_1$ to $E_2$. Since $f_1$ and $f_2$ act by the identity on $H^2(M)$ and $\ss_0$ is a $\text{Spin}^c$-structure with trivial determinant line on $\S^2\times \S^2$, $f_1$ and $f_2$ and therefore $H$ preserve $\ss_M\# \ss_0$. Hence 
\[
\text{FSW}^{\Z/2} (E_2, \ss_M \# \ss_0, H(g_1), H(\mu_1))  \neq \text{FSW}^{\Z/2} (E_2, \ss_M \# \ss_0, g_2, \mu_2).
\]
Also, $H$ would bring $(g_1,\mu_1)$ to a generic family of parameters on $E_2$ with $\mu_2$ small enough (with respect to $c_1(\ss_M)^+$). Hence the above equation contradicts Lemma \ref{lem:szabo-lemma-3.2}.\end{proof}

\begin{corollary}
$2\CP^2 \# 10 \overline{\CP^2}$ admits exotic diffeomorphisms.
\end{corollary}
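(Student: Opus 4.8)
The plan is to deduce the corollary as a direct application of Theorem~\ref{BK-Thm-9.7}, feeding it an exotic smooth structure on $\CP^2\#9\overline{\CP^2}$. Indeed, if $M$ is simply connected with odd intersection form $\langle 1\rangle\oplus 9\langle -1\rangle$, then $b_2^+(M)=1$, $b_2^-(M)=9\le 9$, and $M\#(\S^2\times\S^2)$ has intersection form $2\langle 1\rangle\oplus 10\langle -1\rangle$, i.e.\ is homeomorphic to $2\CP^2\#10\overline{\CP^2}$; so the output of Theorem~\ref{BK-Thm-9.7} would be exactly an exotic diffeomorphism of $2\CP^2\#10\overline{\CP^2}$, provided $M\#(\S^2\times\S^2)$ is in fact diffeomorphic to $(\CP^2\#9\overline{\CP^2})\#(\S^2\times\S^2)=2\CP^2\#10\overline{\CP^2}$.

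Concretely, I would take $M'=\CP^2\#9\overline{\CP^2}$ with its standard smooth structure (the rational elliptic surface $E(1)$) and $M=E(1)_{2,3}$, the Dolgachev surface. Since $\gcd(2,3)=1$ this is simply connected, its intersection form has signature $-8\not\equiv 0\pmod{16}$ hence is odd and therefore isomorphic to $\langle 1\rangle\oplus 9\langle -1\rangle$, and by Freedman's theorem $M$ is homeomorphic to $M'$; fix such a homeomorphism $\phi\colon M\to M'$. Both are indefinite, as required. For the $\text{Spin}^c$-structure I would take $\ss_M$ with $c_1(\ss_M)=\pm K_M$ the (anti)canonical class: since $E(1)_{2,3}$ is a minimal properly elliptic surface with $p_g=0$ one has $K_M\neq 0$ and $K_M^2=0$, and since $2e(M)+3\,\text{sign}(M)=2\cdot 12+3\cdot(-8)=0$ we get $d(M,\ss_M)=(K_M^2-2e(M)-3\,\text{sign}(M))/4=0$.

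It remains to verify Assumptions (1)--(3) of Theorem~\ref{BK-Thm-9.7}. For Assumption (1), the decisive point—the same mechanism as in the proof of Lemma~\ref{lem:szabo-lemma-3.2}—is that $c_1(\ss_M)$ is a \emph{nonzero} class of \emph{nonnegative} square: on a $4$-manifold with $b_2^+=1$ the orthogonal complement of $H^+_g$ is negative definite, so a nonzero square-zero class never lies in it, whence $c_1(\ss_M)^+\neq 0$ for every metric $g$; thus no reducible solution ever occurs, the small-perturbation invariant $SW(M,\ss_M,g,\mu)$ is defined for every $g$, and, crossing no wall as $g$ ranges over the connected space of metrics, it is metric-independent. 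For the Dolgachev surface this invariant equals $\pm 1$ by the standard Seiberg--Witten computation for minimal properly elliptic surfaces with $b_2^+=1$ (cf.\ \cite{Szabo96}); applying the same reasoning to $\phi(\ss_M)$, whose $c_1$ again has square $0$ and is nonzero, shows $SW(M',\phi(\ss_M),g',\mu')$ is well defined, and since $\CP^2\#9\overline{\CP^2}$ admits a positive scalar curvature metric this invariant is $0$. Hence Assumption (1) holds, and Assumption (2) follows since $1\not\equiv 0\pmod 2$. For Assumption (3), I would invoke the classical dissolving theorems of Mandelbaum and Moishezon for simply connected elliptic surfaces: a single stabilization already standardizes the Dolgachev surface, so that
\[
M\#(\S^2\times\S^2)\;\cong\;2\CP^2\#10\overline{\CP^2}\;\cong\;(\CP^2\#9\overline{\CP^2})\#(\S^2\times\S^2)\;=\;M'\#(\S^2\times\S^2),
\]
where the last identity uses only that the forms are odd; in particular $M\#(\S^2\times\S^2)$ and $M'\#(\S^2\times\S^2)$ are diffeomorphic.

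Theorem~\ref{BK-Thm-9.7} then yields an exotic diffeomorphism of $M\#(\S^2\times\S^2)=2\CP^2\#10\overline{\CP^2}$, which is the assertion; using the $\Z$-valued family invariant in place of its mod $2$ reduction one gets, as in the introduction, such a diffeomorphism of infinite order in the mapping class group. The work is essentially bookkeeping—lining up a single exotic building block so that every hypothesis of Theorem~\ref{BK-Thm-9.7} is satisfied at once—and the only genuinely delicate points are the chamber-stability underlying Assumption (1), which is precisely why one needs $c_1(\ss_M)^2\ge 0$ (the same phenomenon that forces the hypothesis $b_2^-\le 10$ in Lemma~\ref{lem:szabo-lemma-3.2}), and the single-stabilization dissolving in Assumption (3), for which one relies on the elliptic-surface decomposition theorems; all the gauge-theoretic content—the winding number, the decoupling of the metric family, and the wall-crossing/gluing computation of $\text{FSW}^{\Z/2}$—is already carried out in the proof of Theorem~\ref{BK-Thm-9.7}.
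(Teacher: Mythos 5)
Your proof is correct, but it feeds Theorem \ref{BK-Thm-9.7} different building blocks than the paper does. The paper takes $M = E(1)_{K_m}$ and $M' = E(1)_{K_n}$, two knot-surgered copies of $E(1)$ with $m$ odd and $n$ even, gets Assumption (1) from Szab\'o's Lemma 3.2, Assumption (2) from Szab\'o's computation $SW = m$ versus $n$ or $0$, and Assumption (3) from the dissolving results of Akbulut, Auckly and Baykur for knot-surgered manifolds. You instead take $M$ to be the Dolgachev surface $E(1)_{2,3}$ and $M'$ the standard rational surface $\CP^2\#9\overline{\CP^2}$, verify Assumption (1) by the same chamber-stability mechanism (nonzero $c_1$ of square zero never becomes anti-self-dual, so no reducibles and no wall is crossed as the metric varies), get Assumption (2) from $SW=\pm 1$ for the minimal properly elliptic surface versus $0$ for the positive-scalar-curvature side, and Assumption (3) from the classical Mandelbaum--Moishezon almost-complete-decomposability of simply connected elliptic surfaces together with the non-spin identity $X\#(\S^2\times\S^2)\cong X\#\CP^2\#\overline{\CP^2}$. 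Both routes are legitimate: yours relies only on older, classical inputs (Freedman, Witten's K\"ahler computation, Mandelbaum--Moishezon) and is arguably more self-contained, while the paper's choice of two knot-surgered manifolds gives an infinite supply of pairs with prescribed invariants $m$ and $n$ and matches the reference chain it cites. Two small points to tighten: your citation of \cite{Szabo96} for the value $SW(E(1)_{2,3},\pm K)=\pm 1$ is misplaced (that computation is due to the K\"ahler-surface/elliptic-surface literature, e.g.\ Friedman--Morgan or Fintushel--Stern, not to Szab\'o's family), and you should state explicitly that the vanishing on the positive-scalar-curvature side is first obtained for the PSC metric and then propagated to all metrics and small perturbations by the same no-wall argument, which you do sketch; neither affects the validity of the argument.
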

\begin{proof}
\cite{Szabo96} proves that there exists a family of manifolds $E(1)_{K_k}$ (where $K_k$ is the $k$-twist knot, as mentioned in the last part of \cite{fintushel1997knotslinks4manifolds}) that are homeomorphic to $E(1)$. Choose $M = E(1)_{K_m}$ and $M' = E(1)_{K_n}$ for $m,n \ge 0$, $m$ odd and $n$ even. We have that $b_2^+(M) = 1, b_2^-(M) \leq 9$. By \cite{Szabo96} Lemma 3.2, Assumption (1) in Theorem \ref{BK-Thm-9.7} is satisfied. Choose $\ss_M$ such that $c_1(\ss_M)$ is the Poincar\'{e} dual of a regular elliptic fiber. By \cite{Szabo96} Theorem 3.3, $SW(M, \ss_M, g_M, \mu_M)=m$ and $SW({M'}, \phi(\ss_M), g_{M'}, \mu_{M'})$ can be only $n$ or $0$. Hence Assumption (2) in Theorem \ref{BK-Thm-9.7} is satisfied. By \cite{Akbulut02}, \cite{Auckly03} and \cite{baykur2018dissolvingknotsurgered4manifolds}, both $M\# (\S^2\times \S^2)$ and $M'\# (\S^2\times \S^2)$ dissolve. Hence Assumption (3) in Theorem \ref{BK-Thm-9.7} is satisfied.
\end{proof}

\section{The order of an exotic diffeomorphism on manifolds with $b^+_2 = 2$}
In the previous section we show that there exists an exotic diffeomorphism $f_1\circ f_2^{-1}$. In this section we determine the order of it by the integral families invariant. Note that since $f_1\circ f_2^{-1}$ preservs the homology orientation, the families Seiberg-Witten invariant is a well-defined $\Z$-valued invariant for a given parameter family.

\begin{theorem}
Use the notations and assumptions of Theorem \ref{BK-Thm-9.7}. Let $E({f}^n)$ be the mapping torus of the $n$-fold composition of $f :=f_1\circ f_2^{-1}$. Then for any family of metrics $g$ and sufficiently small (with respect to $c_1(\ss_M\# \ss_0)^+$) perturbation family $\mu$
\[
\text{FSW}^{\Z} (E({f}^n), \ss_M \# \ss_0, g, \mu)\neq 0.
\]
\end{theorem}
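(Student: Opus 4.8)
The strategy is to reduce the computation of $\text{FSW}^{\Z}(E(f^n), \ss_M \# \ss_0, g, \mu)$ to an iterated gluing/wall-crossing count, exactly as in the mod $2$ argument of Theorem \ref{BK-Thm-9.7}, but keeping track of signs and multiplicities so that the answer is a nonzero integer rather than merely a nonzero residue. First I would observe that $f = f_1 \circ f_2^{-1}$ preserves the homology orientation and fixes $\ss_M \# \ss_0$ (since $f_1$ and $f_2$ both act by the identity on $H^2(M)$ and $\ss_0$ has trivial determinant line), so $\text{FSW}^{\Z}(E(f^n), \ss_M \# \ss_0, g, \mu)$ is a well-defined integer invariant, independent of the choice of $(g, \mu)$ among chambers with $\mu$ small relative to $c_1(\ss_M \# \ss_0)^+$ by the integral analogue of Lemma \ref{lem:szabo-lemma-3.2} (the winding-number argument of Lemmas \ref{lem:szabo-lemma-3.1} and \ref{lem:szabo-lemma-3.2} is insensitive to whether one works mod $2$ or over $\Z$, provided the structure group preserves the orientation of the positive cone). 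The heart of the matter is therefore to compute this integer for one convenient choice of parameter family.

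Next I would pick a standard family adapted to the $n$-fold mapping torus. Write $E(f^n)$ as an $S^1$-family built by concatenating $n$ copies of the mapping cylinder of $f$; equivalently, realize $E(f^n)$ via the $n$-fold cyclic cover relationship with $E(f)$. The key geometric input is that $f_1$ and $f_2$ differ only in a neighborhood of the $\S^2\times\S^2$-summand: $f_1 = id_M \# r$ and $f_2 = \psi^{-1}(id_{M'}\# r)\psi$, and $r$ is the reflection negating $H^2(\S^2\times\S^2)$. Using a family of metrics that is a product near the connected-sum neck and a perturbation family that is small and supported on the $M$- (resp. $M'$-) side, the gluing theorem of \cite{BK2020} identifies the parameterized moduli space of $E(f^n)$ with a fiber product of: (i) the reducible wall on the $\S^2\times\S^2$-side, which under the $r$-twist forces the self-dual form to pass through the origin, contributing a wall-crossing term, and (ii) the irreducible Seiberg-Witten moduli spaces on the $M$- and $M'$-fibers. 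Counting the isolated glued solutions with orientations then yields a formula of the shape $\text{FSW}^{\Z}(E(f^n), \ss_M\#\ss_0, g, \mu) = \pm\big(SW(M,\ss_M) - SW(M', \phi(\ss_M))\big)\cdot W_n$, where $W_n$ is the signed count of wall crossings produced by running around the $n$-fold base circle — which should equal $\pm n$ (each of the $n$ concatenated blocks contributing exactly one transverse crossing of the reducible wall with a definite sign).

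Finally I would assemble these pieces: by Assumption (2) of Theorem \ref{BK-Thm-9.7} the factor $SW(M,\ss_M) - SW(M',\phi(\ss_M))$ is odd, hence nonzero over $\Z$, and $W_n = \pm n \neq 0$ for every $n \geq 1$; therefore the product is a nonzero integer, which is what we want. The main obstacle I anticipate is step two: making the orientation bookkeeping in the parameterized gluing rigorous, i.e. verifying that the $n$ wall crossings all carry the \emph{same} sign (so they add rather than cancel) and that the gluing map is orientation-coherent with the chosen homology orientation. This is precisely where the winding-number viewpoint helps — the sign of each wall crossing is governed by the local degree of the projection of $\mathcal{H}(\mu)$ onto the fiber of $\mathcal{S}_g$, and the structure group of $E(f^n)$ preserving the orientation of the positive cone forces these local degrees to be consistent, so that $W_n$ literally computes the winding number of the parameter family, which is $\pm n$ by construction of $f$ from the single reflection $r$. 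Once that sign-coherence is in hand, the nonvanishing is immediate; the remaining verifications (transversality of the wall crossings, that $\mu$ small suffices, compatibility of the product metric with the gluing region) are routine adaptations of \cite{BK2020} and \cite{Szabo96}.
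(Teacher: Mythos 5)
Your proposal is correct and follows essentially the same route as the paper: decompose the parameterized moduli space of the concatenated family block by block, use the gluing of \cite{BK2020} to identify each period's contribution with the ordinary counts on $M$ and $M'$ (which by assumption (2) of Theorem \ref{BK-Thm-9.7} have different parities, so the per-period total is an odd, hence nonzero, integer), multiply by $n$, and transfer to arbitrary $(g,\mu)$ with $\mu$ small via Lemma \ref{lem:szabo-lemma-3.2}. The only notable difference is that the paper sidesteps your sign-coherence bookkeeping by taking the parameter family on $E(f^n)$ to be $n$ translated copies of one fixed family on $E(f)$, so that $\mathrm{FSW}^{\Z}(E(f^n)) = n\,\mathrm{FSW}^{\Z}(E(f))$ holds by construction; note also that each period actually contains two reducible events (an $E_1$-type one counted by $SW(M,\ss_M)$ and an $E_2$-type one counted by $SW(M',\phi(\ss_M))$) rather than a single crossing weighted by their difference, though only the parity of the per-period count matters for the conclusion.
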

\begin{proof}
We choose $g_1$, $\mu_1$ to be any generic family of parameters on $E_1$ with $\mu_1$ small enough (with respect to $c_1(\ss_M)^+$). Regard the base space $B$ as the interval $[0,1]$ with endpoints identified. Then $g_1, \mu_1$ are functions on $[0,1]$ with $f_1(g_1(1)) = g_1(0)$ and $f_1(\mu_1(1)) = \mu_1(0)$. Let $g_2$ be a path that connects $g_1(1)$ to $f_2(g_1(1))$. Let $\mu_2$ be a sufficiently small path that connects $\mu_1(1)$ to $f_2(\mu_1(1))$. Let $g$ be the concatenation of $g_1$ and $g_2$. Let $\mu$ be the concatenation of $\mu_1$ and $\mu_2$ (see Figure \ref{fig:concatenation}). 

\begin{figure}[ht!]
    \begin{Overpic}{\includegraphics[scale=0.8]{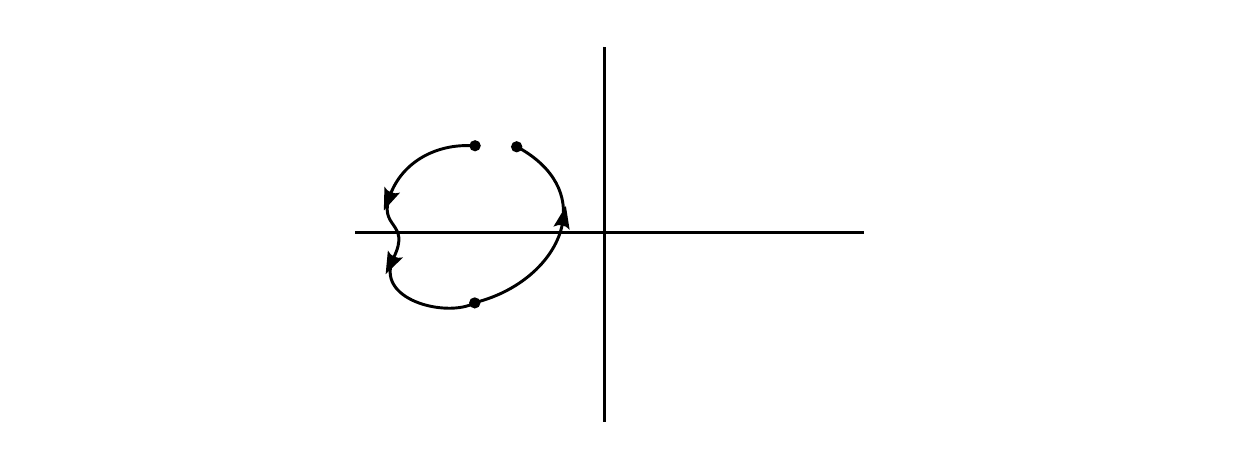}}
    \put(33,27){$\mu_1(0)$}
    \put(37,11){$\mu_1(1)$}
    \put(40,27){$f_2(\mu_1(1))$}
     \put(28,22){$\mu_1$}
      \put(42,22){$\mu_2$}
    \end{Overpic}
    \caption{}
    \label{fig:concatenation}
\end{figure}
We see that $(g_2,\mu_2)$ is a parameter family for the mapping torus $E(f_2^{-1})$, and $(g,\mu)$ is a parameter family for the mapping torus $E(f)$. Since $f_2 = f_2^{-1}$ from the construction, we have $E(f_2^{-1})= E_2$. By the definition of the parameterized moduli space we have
\[
\FM(E(f), \ss_M\# \ss_0, g, \mu) = \bigsqcup_{b\in B}\M(E_1, \ss_M\# \ss_0, g_1(b), \mu_1(b)) \bigcup \bigsqcup_{b\in B}\M(E_2, \ss_M\# \ss_0, g_2(b), \mu_2(b)).
\]
We don't know how many points with signs there are in these two sets, but as in the proof of Theorem \ref{BK-Thm-9.7} we know they have different parites. Hence
\[
FSW(E(f), \ss_M\# \ss_0, g, \mu)  \neq 0.
\]
By Lemma \ref{lem:szabo-lemma-3.2}, this nontrivial result works for any family of metrics $g$ and sufficiently small (with respect to $c_1(\ss_M\# \ss_0)^+$) perturbation family $\mu$. Similarly, for $f^n$ we have
\[
FSW(E(f^n), \ss_M\# \ss_0, g^n, \mu^n) = n FSW(E(f), \ss_M\# \ss_0, g, \mu)  \neq 0,
\]
where $(g^n,\mu^n)$ is obtained from $(g,\mu)$ by translations and concatenations.
\end{proof}

\begin{corollary}
Above $f^n$ is an exotic diffeomorphism for $n\in \Z$. 
\end{corollary}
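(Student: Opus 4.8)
The plan is to establish, for each nonzero $n\in\Z$ (the case $n=0$ being vacuous), the two defining properties of an exotic diffeomorphism of $X:=M\#(\S^{2}\times\S^{2})$: that $f^{n}$ is topologically isotopic to $\id_{X}$, and that $f^{n}$ is not smoothly isotopic to $\id_{X}$. The first property is essentially at hand: in the proof of Theorem \ref{BK-Thm-9.7} we saw that $f_{1}$ and $f_{2}$ are continuously isotopic, so $f=f_{1}\circ f_{2}^{-1}$ is continuously isotopic to $\id_{X}$, and concatenating $|n|$ copies of such an isotopy (inverting it when $n<0$) shows $f^{n}$ is continuously isotopic to $\id_{X}$ for every $n$.

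For the second property I would argue by contradiction. If $f^{n}$ were smoothly isotopic to $\id_{X}$, the isotopy would furnish a bundle isomorphism $H\colon E(f^{n})\to X\times B$ over $B=\S^{1}$. As in the proof of Theorem \ref{BK-Thm-9.7}, $f_{1}$ and $f_{2}$ act by the identity on $H^{2}(M)$ and $\ss_{0}$ is the $\mathrm{Spin}^{c}$-structure on $\S^{2}\times\S^{2}$ with trivial determinant line, so $f$, and hence $f^{n}$, preserve $\ss_{M}\#\ss_{0}$; thus $\ss_{M}\#\ss_{0}$ descends to a $\mathrm{Spin}^{c}$-structure on $E(f^{n})$ that $H$ identifies with the constant one on $X\times B$. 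By the invariance of $\text{FSW}^{\Z}$ under bundle isomorphisms,
\[
\text{FSW}^{\Z}\bigl(E(f^{n}),\ss_{M}\#\ss_{0},g,\mu\bigr)=\text{FSW}^{\Z}\bigl(X\times B,\ss_{M}\#\ss_{0},H(g),H(\mu)\bigr).
\]

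Next I would compute the right-hand side directly for the trivial bundle, using a constant generic family $(g_{0},\mu_{0})$ with $\mu_{0}$ small with respect to $c_{1}(\ss_{M}\#\ss_{0})^{+}$. The parameterized moduli space is then $\mathcal{M}(X,\ss_{M}\#\ss_{0},g_{0},\mu_{0})\times B$; there are no reducibles because $b_{2}^{+}(X)=2>0$, and the expected dimension of $\mathcal{M}(X,\ss_{M}\#\ss_{0},g_{0},\mu_{0})$ is $d(M,\ss_{M})+d(\S^{2}\times\S^{2},\ss_{0})+1=0+(-2)+1=-1$, so this moduli space is empty for generic $(g_{0},\mu_{0})$ and $\text{FSW}^{\Z}(X\times B,\ss_{M}\#\ss_{0},g_{0},\mu_{0})=0$. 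Since $H_{1}(X;\Z)=0$, $b_{2}^{+}(X)=2$ and $b_{2}^{-}(X)=b_{2}^{-}(M)+1\leq 10$, Lemma \ref{lem:szabo-lemma-3.2} applies to the bundle $X\times B$ and shows this vanishing persists for every metric family and every sufficiently small perturbation family. Now take $g$ arbitrary and $\mu$ small enough that $H(\mu)$ is also sufficiently small on $X\times B$; the displayed equality then gives $\text{FSW}^{\Z}(E(f^{n}),\ss_{M}\#\ss_{0},g,\mu)=0$, contradicting the theorem above. Hence $f^{n}$ is not smoothly isotopic to $\id_{X}$, and being topologically isotopic to $\id_{X}$ it is an exotic diffeomorphism.

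The main obstacle is the comparison with the trivial bundle: a priori $\text{FSW}^{\Z}$ depends on the chamber cut out by $(g,\mu)$, and over $E(f^{n})$ we have no control over the metric family $H(g)$ produced by the hypothetical smooth isotopy. This is precisely where Lemma \ref{lem:szabo-lemma-3.2} is indispensable, as under $b_{2}^{-}\leq 10$ it decouples the chamber from the family of metrics and from the $\mathrm{Spin}^{c}$-structure; the remaining ingredients — functoriality of $\text{FSW}^{\Z}$ under bundle isomorphisms, the dimension count, and the fact that $f^{n}$ fixes $\ss_{M}\#\ss_{0}$ — are routine.
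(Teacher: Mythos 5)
Your proposal is correct and follows essentially the same route as the paper: topological isotopy to the identity from the proof of Theorem \ref{BK-Thm-9.7}, then a contradiction by comparing $E(f^n)$ with the trivial bundle, whose invariant vanishes by the dimension count $d=-1$ for a constant generic family and stays zero for all metric families and small perturbations by Lemma \ref{lem:szabo-lemma-3.2}, against the nonvanishing of $\text{FSW}^{\Z}(E(f^n))$ from the preceding theorem. You simply spell out the bundle-isomorphism and $\mathrm{Spin}^c$-preservation steps that the paper leaves implicit.
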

\begin{proof}
In the proof of Theorem \ref{BK-Thm-9.7} we have shown that $f$ is continuously isotopic to the identity. Hence $f^n$ is continuously isotopic to the identity. 

Let $E$ be the trivial bundle of $M \#  (\S^2\times \S^2)$ over $B$. Let $(g_0, \mu_0)$ be a generic parameter for $M \#  (\S^2\times \S^2)$. Then 
\[
FSW(E, \ss_M\# \ss_0, g_0, \mu_0) = 0
\]
since the formal dimension of the moduli space is $-1$. By Lemma \ref{lem:szabo-lemma-3.2}, $f^n$ is not smoothly isotopic to the identity.
\end{proof}

\bibliographystyle{alpha}
\bibliography{./diff}

\begin{thebibliography}{KMT24}

\bibitem[Akb02]{Akbulut02}
Selman Akbulut.
\newblock Variations on {F}intushel-{S}tern knot surgery on 4-manifolds.
\newblock {\em Turkish J. Math.}, 26(1):81--92, 2002.

\bibitem[Auc03]{Auckly03}
David Auckly.
\newblock Families of four-dimensional manifolds that become mutually
  diffeomorphic after one stabilization.
\newblock In {\em Proceedings of the {P}acific {I}nstitute for the
  {M}athematical {S}ciences {W}orkshop ``{I}nvariants of {T}hree-{M}anifolds''
  ({C}algary, {AB}, 1999)}, volume 127, pages 277--298, 2003.

\bibitem[Bay18]{baykur2018dissolvingknotsurgered4manifolds}
R.~Inanc Baykur.
\newblock Dissolving knot surgered 4-manifolds by classical cobordism
  arguments, 2018.

\bibitem[BK20]{BK2020}
David Baraglia and Hokuto Konno.
\newblock A gluing formula for families {S}eiberg–{W}itten invariants.
\newblock {\em Geometry Topology}, 24:1381--1456, 2020.

\bibitem[BT82]{BT82}
Raoul Bott and Loring~W. Tu.
\newblock {\em Differential forms in algebraic topology}, volume~82 of {\em
  Graduate Texts in Mathematics}.
\newblock Springer-Verlag, New York-Berlin, 1982.

\bibitem[FS97]{fintushel1997knotslinks4manifolds}
Ronald Fintushel and Ronald~J. Stern.
\newblock Knots, links, and 4-manifolds, 1997.

\bibitem[KMT24]{konno2024exoticdehntwists4manifolds}
Hokuto Konno, Abhishek Mallick, and Masaki Taniguchi.
\newblock Exotic dehn twists on 4-manifolds, 2024.

\bibitem[LL01]{LL01}
Tian-Jun Li and Ai-Ko Liu.
\newblock Family {S}eiberg-{W}itten invariants and wall crossing formulas.
\newblock {\em Communications in Analysis and Geometry}, 9, 08 2001.

\bibitem[Qui86]{Quinn86}
F~Quinn.
\newblock Isotopy of 4-manifolds.
\newblock {\em J. Differential Geom.}, 24:343--372, 1986.

\bibitem[Rub98]{Ruberman1998AnOT}
Daniel Ruberman.
\newblock An obstruction to smooth isotopy in dimension 4.
\newblock {\em Math. Res. Lett.}, 5:743--758, 1998.

\bibitem[Ste51]{Steenrod51}
Norman Steenrod.
\newblock {\em The Topology of Fibre Bundles}.
\newblock Princeton University Press, 1951.

\bibitem[Sza96]{Szabo96}
Zolt\'an Szab\'o.
\newblock Exotic {$4$}-manifolds with {$b^+_2=1$}.
\newblock {\em Math. Res. Lett.}, 3(6):731--741, 1996.

\bibitem[Wal64]{Wall64}
C.~T.~C. Wall.
\newblock Diffeomorphisms of {$4$}-manifolds.
\newblock {\em J. London Math. Soc.}, 39:131--140, 1964.

\end{thebibliography}
\end{document}